\definecolor{hot}{RGB}{65,105,225}
\theoremstyle{plain}
\newtheorem{theorem}{Theorem}[section]
\newtheorem{prop}[theorem]{Proposition}
\newtheorem{lm}[theorem]{Lemma}
\newtheorem{cor}[theorem]{Corollary}
\newtheorem{thrm}[theorem]{Theorem}
\theoremstyle{definition}
\newtheorem{defn}[theorem]{Definition}
\newtheorem{que}[theorem]{Question}
\newtheorem{rmk}[theorem]{Remark}
\newtheorem{ex}[theorem]{Example}
\newtheorem*{ex*}{Example}
\def\be{\begin{equation}}
\def\ee{\end{equation}}
\def\bt{\begin{thrm}}
\def\et{\end{thrm}}
\def\bc{\begin{cor}}
\def\ec{\end{cor}}
\def\br{\begin{rmk}}
\def\er{\end{rmk}}
\def\bp{\begin{prop}}
\def\ep{\end{prop}}
\def\bl{\begin{lm}}
\def\el{\end{lm}}
\def\bex{\begin{ex}}
\def\eex{\end{ex}}
\def\bd{\begin{defn}}
\def\ed{\end{defn}}
\newcommand{\C}{\mathbb{C}}
\newcommand{\CP}{\mathbb{CP}}
\newcommand{\Z}{\mathbb{Z}}
\newcommand{\K}{\mathbb{K}}
\newcommand{\B}{\mathbb{B}}
\newcommand{\U}{\mathcal{U}}
\newcommand{\sA}{\mathcal{A}}
\newcommand{\rk}{\mathrm{rank}}
\begin{document}

\title[]{The homology groups of finite cyclic coverings of line arrangement complements}


\author{Yongqiang Liu}
\address{The Institute of Geometry and Physics, University of Science and Technology of China, 96 Jinzhai Road, Hefei, Anhui 230026 China}
\email{liuyq@ustc.edu.cn}

\author{Wentao Xie}
\address{School of Mathematical Sciences, University of Science and Technology of China, 96 Jinzhai Road, Hefei, Anhui 230026 China}
\email{xwt@mail.ustc.edu.cn}

\date{\today}

\keywords{Hyperplane arrangement, Milnor fiber, finite cyclic covering}

\subjclass[2020]{Primary 52C35, 32S55}

\begin{abstract}
In this paper, we study the first homology group of finite cyclic covering of complex line arrangement complement. We show that this first integral homology group is torsion-free under certain condition similar to the one used by Cohen-Dimca-Orlik \cite[Theorem 1]{CDO03}.  In particular, this includes the case of the Milnor fiber, which generalizes the previous results obtained by Williams \cite[Main Theorem 1]{Wil13} for complexified line arrangement to any complex line arrangement.
\end{abstract}

\maketitle


\section{Introduction}

\subsection{Background}
An arrangement of hyperplanes $\sA$ is a finite collection of hyperplanes in a finite dimensional complex vector (affine, or projective) space.  A fundamental problem in the theory of hyperplane arrangements is to decide whether various topological invariants of the complement of $\sA$ are determined by the combinatorial data (i.e., the poset of intersections of hyperplanes) of $\sA$. For instance, the homology groups and the cohomology ring of the hyperplane arrangement complement are combinatorially determined (e.g., see \cite{OT}), while the fundamental group of the complement are not \cite{Ryb11}.   It is an open question whether
 the homology groups of  a finite cover  of a hyperplane arrangement
complement are combinatorially determined. This includes the Milnor fiber of a central hyperplane arrangement. 
See Papadima-Suciu's work \cite{PS17} for recent progress in this direction.   The readers may refer to  Suciu's two survey papers for the required background on hyperplane arrangements:  \cite{Suc01} for an overview of the theory and  \cite{Suc14},  which deals specifically with the boundary manifold and the Milnor fiber of an arrangement.

It is well-known that the homology groups of hyperplane arrangement complement are torsion-free. In fact, Dimca-Papadima \cite{DP03} and Randell \cite{Ran02} independently showed that the complement of a hyperplane arrangement is homotopy equivalent to a minimal CW complex. On the other hand,  Cohen-Denham-Suciu \cite[Theorem 1]{CDS03} gave examples of arrangements with multiplicities whose first homology group of the
Milnor fiber has torsion. Later on Denham-Suciu \cite[Theorem 1.1]{DS14} showed that for every prime $p\geq2$, there are examples of hyperplane arrangements whose higher degree homology of the Milnor fiber has $p$-torsion. This nice result resolved Dimca-N\'{e}methi \cite[Question 7.3]{DN04} and Randell's question \cite[Problem 7]{Ran11}  whether the Milnor fiber of central hyperplane arrangement always has torsion-free homology.  
  In particular, this gives examples of hyperplane arrangement whose Milnor fiber do not necessarily have a minimal CW structure. Yoshinaga \cite[Theorem 1.2]{Yos20} gave the first example of hyperplane arrangement whose first homology group of Minor fiber  has $2$-torsion.   
  
  Denham-Suciu raised the following question:
\begin{que}\label{que}\cite[Question 1.2]{DS14} Is the torsion in the homology of the Milnor fiber of a hyperplane
arrangement combinatorially determined?
\end{que}
Note that the Milnor fiber of central hyperplane arrangement is a finite cyclic covering of the complement of the associated projective hyperplane arrangement. 
One can ask the same question for any finite cyclic covering of hyperplane arrangement complement.
In this paper, we focus on the line arrangement case regarding this question.

\subsection{Main results}

 Let $\sA$ be an essential  line arrangement of $n$ lines in $\CP^{2}$ with complement $\U$. 
Let  $Q(\sA)\in \C[z_0, z_1,z_2]$ be the reduced defining homogeneous  polynomial for $\sA$, defined up to multiplication by a non-zero scalar. The Milnor fiber $F$ of the projective arrangement $\sA$ is defined by $Q(\sA)=1$, which is a smooth hypersurface in $\C^3$. Then $F$ is a $n$-fold cyclic covering of $\U$.
We recall the following result due to Williams, which gives a partial answer to Question \ref{que}.
\bt\label{theorem williams} \cite[Main Theorem 1]{Wil13}  Let $\sA$ be a complexified real arrangement of $n$ lines in $\CP^2$ with the Milnor fiber $F$.  Taking any line $H\in\sA$, let  $\{P_1,\dots,P_s\}$ denote the set of multiple points on $H$ and let $m_i$ denote  the  multiplicity of $P_i$ for $1\leq i\leq s$.  
Then, for any field $\K$, the first Betti number of F with respect to $\K$ satisfies
\be  \label{inequality Williams}
 b_1(F,\K)  \leq n-1+\sum^s_{i=1}(m_i-2)(\gcd(m_i,n)-1).
\ee
Furthermore, if $\gcd(m_i,n)=1$ for every $m_i>2$ (e.g. $n$ is a prime number), then $H_1(F,\Z)\cong \Z^{n-1}$ is torsion-free. 
\et


For a full generalization beyond the Milnor fiber case, we use the following setup. Let $\sA$ be an essential complex arrangement of $n$ lines in $\CP^2$ with complement $\U$. Fix any line $H\in \sA$,  let  $\{P_1,\dots,P_s\}$ denote the set of multiple points on $H$ and let $m_i$ denote  the  multiplicity of $P_i$ for $1\leq i\leq s$.  Here since $\sA$ is essential, we have $s>1$.
We label the other lines according to their intersections with $H$ by two indices: the
first indicating the intersection point and the second indicating the position of the hyperplane with respect to the intersection point. Note that the multiplicity of each intersection point $P_i$ is given by $m_i$, and that $H$ is always the first hyperplane with respect to each intersection point. We then have
$$ \sA = \{H, H_{1,2}, H_{1,3}, \dots, H_{1,m_1}, H_{2,2},\dots, H_{s,m_s} \}.$$
Fix an epimorphism $\varphi:\pi_1(\U)\twoheadrightarrow \Z$ with \begin{center}
$\varphi(\alpha)=\epsilon\in \Z$ and   $\varphi(\alpha_{i,j})=\epsilon_{i,j}\in \Z$ for all $1\leq i\leq s, 2\leq j\leq m_i.$
\end{center} 
Here $\alpha$ and $\alpha_{i,j}$ are the meridians associated to $H$ and $H_{i,j}$ respectively. Set $\varepsilon_i=\epsilon+\sum^{m_i}_{j=2} \epsilon_{i,j}$ for all $1\leq i \leq s$.

\bt \label{main theorem}
 With the above notations and assumptions,   consider the composed map $\pi_1(\U)\overset{\varphi}{ \twoheadrightarrow} \Z \twoheadrightarrow \Z_N$ and let $\U^{\varphi, N}$ denote the associated $N$-fold covering.
If  $\epsilon=1$ and $\varepsilon_i\neq 0$ for all $m_i>2$, then 
for any field $\K$ we have
\be  \label{main inequality}
 b_1 (\U^{\varphi, N},\K) \leq (n-1)+\sum^s_{i=1}(m_i-2)(\gcd(\varepsilon_i,N)-1) .
\ee 
Furthermore, if  $\gcd(\varepsilon_i,N)=1$ for all $m_i>2$, 
 $H_1(\U^{\varphi, N},\Z)\cong \Z^{n-1}$ is torsion-free. 
\et
\br \label{rem assumption} The setup we used here is to consider all finite cyclic covering determined by $\varphi$ at once. This setup is convenient for our proof and can be used to prove a divisibility result for the Alexander polynomial of $\U$ (see Proposition \ref{prop Alexander}), which partially generalizes  
Elduque's result \cite[Theorem 6]{Eld21}. If one only focuses on the $N$-fold covering, the assumption can be changed to 
\begin{center}
     $\epsilon \equiv 1 \mod N $ and $\varepsilon_i\neq 0 \mod N$ for all $m_i>2$.
\end{center}
When $N=2$, this corresponds to the CDO-condition considered by Sugawara in \cite[Definition 2.1]{Sug23}. In particular, it is already proved by the first author joint with Liu \cite[Corollary 1.7]{LL23} that the 2-fold covering of the complement has torsion-free homology groups using the results proved by Sugawara for complexified real arrangement case \cite[Theorem 1.3]{Sug23} and the first author joint with Maxim and Wang \cite[Theorem 1.2]{LMW}.

In fact, the two conditions $\epsilon\neq 0$ and $\varepsilon_i\neq 0$ for all $m_i>2$ are corresponding to the conditions used by Cohen-Dimca-Orlik in \cite[Theorem 1]{CDO03}. We have the condition $\epsilon=1$ in Theorem \ref{main theorem}  to make the computation manageable. 
\er
 
The following result is a direct application of Theorem \ref{main theorem}.
\begin{cor} With the same assumptions and notations as in Theorem \ref{main theorem}, 
if $m_i=2$ for every  multiple point $P_i$ in $H$ and $\epsilon=1$,  $H_1(\U^{\varphi, N},\Z)\cong \Z^{n-1}$ is torsion-free.  
\end{cor}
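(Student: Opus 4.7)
The plan is to apply Theorem \ref{main theorem} directly: under the hypothesis that every multiple point on $H$ has multiplicity exactly two, the index set $\{i : m_i > 2\}$ is empty, so every condition in Theorem \ref{main theorem} that quantifies over indices with $m_i > 2$ is vacuously satisfied. Both the Betti bound and the torsion-free conclusion then fall out immediately.

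Concretely, the hypothesis $\epsilon = 1$ is given, and the requirement that $\varepsilon_i \neq 0$ for all $m_i > 2$ holds vacuously. Applying Theorem \ref{main theorem}, the inequality \eqref{main inequality} specializes to
\[
b_1(\U^{\varphi, N}, \K) \;\leq\; (n-1) + \sum_{i=1}^{s}(m_i - 2)\bigl(\gcd(\varepsilon_i, N) - 1\bigr) \;=\; n-1,
\]
since every term of the sum vanishes. For the torsion-free statement, the additional hypothesis $\gcd(\varepsilon_i, N) = 1$ for all $m_i > 2$ is again a quantification over the empty set, hence vacuous, so the second assertion of Theorem \ref{main theorem} yields $H_1(\U^{\varphi, N}, \Z) \cong \Z^{n-1}$.

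There is no genuine obstacle here; the work is fully contained in Theorem \ref{main theorem}, and the corollary is obtained by specializing to a configuration in which every non-trivial hypothesis becomes vacuous. The only thing worth flagging is a sanity check that the proof of Theorem \ref{main theorem} does not implicitly require the existence of some index with $m_i > 2$ (for example, in setting up a Fox calculus or spectral-sequence argument). Since the upper bound $n-1$ and the rank identification both remain meaningful and consistent when that index set is empty, the specialization is legitimate and the corollary follows at once.
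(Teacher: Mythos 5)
Your argument is exactly the paper's: the corollary is stated there as a direct application of Theorem \ref{main theorem}, and your observation that both the condition $\varepsilon_i\neq 0$ for $m_i>2$ and the condition $\gcd(\varepsilon_i,N)=1$ for $m_i>2$ become vacuous, while every summand $(m_i-2)(\gcd(\varepsilon_i,N)-1)$ vanishes, is precisely the intended specialization. The proof is correct.
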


Since the Milnor fiber is a special case satisfying the assumptions in Theorem \ref{main theorem}, we extend Theorem \ref{theorem williams} to any complex line arrangement.
\bc \label{cor Milnor fiber} Theorem \ref{theorem williams} holds for any complex line arrangement.
\ec 

As noted by Williams in \cite[Section 4]{Wil13}, the inequality (\ref{inequality Williams}) is quite similar to the one obtained by Cohen-Dimca-Orlik in \cite[Theorem 13]{CDO03}, which is linked to the direction of divisibility results for Alexander polynomials of hypersurface complements. This topic was initiated by Libgober \cite{L82,L94} and developed by many other authors, see \cite{DL,Max,DM,CAF,Liu}.
In particular, the recent work by Elduque \cite{Eld21} gives a divisibility result for twisted Alexander polynomial of line arrangement complement. She constructed a tubular neighborhood along one line in loc. cit., which  
plays a key role in the proof of Theorem \ref{main theorem}. The key point in the proof is to compute the fundamental group of the boundary manifold of this neighbourhood.  The computation is done by a method due to Hirzebruch \cite{H}. For related works,  Westlund \cite{W}  computed the fundamental group of the boundary manifold of the complement $\U$. Cohen-Suciu \cite{CS} found a minimal presentation for such fundamental group and computed their twisted Alexander polynomials. Hironaka \cite{Hir} and Florens-Guerville-Marco \cite{FGM}  studied relationships between the fundamental group of a line arrangement complement and that of the boundary manifold of such an arrangement. The computation in our case is similar but much simpler, since our boundary manifold is only part of  the boundary manifold of the line arrangement complement $\U$. 
On the other hand, the multivariable Alexander polynomial of $\U$ was first studied by Dimca-Papadima-Suciu in \cite{DPS08}. In particular, they showed that the multivariable Alexander polynomial of $\U$ is non-constant if and only if the line arrangement is a pencil with at least three lines, see \cite[Proposition 3.4]{DPS08}.

\subsection{Summary}
The paper is organized as follows.
In section 2, we recall the construction of the boundary manifold of the tubular neighbourhood of $H$ and compute its fundamental group. Then we use Fox calculus to compute the Alexander matrix. 
Section 3 is devoted to the proof of Theorem \ref{main theorem}, Corollary \ref{cor Milnor fiber} and the divisibility results we mentioned earlier. 
 


\bigskip

{\bf Convention.} We will use the following convention throughout the paper:
\begin{itemize}
    \item $\Z_N$ denotes the cyclic group  $\Z/N\Z$.
    \item  For a space $X$, the $i$-th Betti number with field coefficients $\K$ of $X$ is denoted by $b_i(X,\K)$.
\item  A covering always means an unbranched covering.
\item We will assume that the base point $x_0 \in X$ is fixed when we consider the fundamental group $\pi_1(X) = \pi_1(X, x_0)$.
\end{itemize}

\bigskip

{\bf Acknowledgements.} Both authors are  supported by National Key Research and Development Project SQ2020YFA070080, NSFC grant No. 12001511, the Project of Stable Support for Youth Team in Basic Research Field CAS (YSBR-001), the starting grant from University of Science and Technology of China, the project ``Analysis and Geometry on Bundles" of Ministry of Science and Technology of the People's Republic of China and  Fundamental Research Funds for the Central Universities.

\section{The boundary manifold of the tubular neighbourhood}
In this section, we focus on the boundary manifold of the tubular neighbourhood along a line $H\in \sA$. 



\subsection{Construction}
We follow Elduque's construction presented in \cite[Section 2, 3]{Eld21}. 

Let $\sA$ be a line arrangement in $\CP^2$ with $n$ lines. 
For a fixed line $H\in \sA$, recall that $H$ has the set of  multiple points $\{P_1,\dots, P_s\}$ with the  multiplicity $m_i$ for $P_i$ and
$$\sA=\{H,H_{1,2},\dots,H_{1,m_1},\dots,H_{s,2},\dots,H_{s,m_s} \}.$$ 

For each singular point $P_i$, we take a  small enough ball $\B_i$ centered at the point $P_i$. 
Then we take a tubular neighbourhood of $H\setminus\cup_{i=1}^s \B_i(\frac{1}{2})$  with a even smaller radius, denoted by $\Sigma$. Here $\B_i(\frac{1}{2})$ is the ball  with the same center as  $\B_i$  but half radius. 
Set $$W=\Sigma \cup (\cup_{i=1}^s \B_i).$$ By rounding corners, its boundary manifold $X=\partial W$  is a real 3-dimensional manifold, which can be described as follows.
Set $N=\big(H\setminus \cup_{i=1}^s \B_i\big)\times S^1$, which should be thought of as the boundary of $\Sigma$ around the non-singular part of $H$. 
We have that $$\partial N= \partial \big(H \setminus\cup_{i=1}^s \B_i\big)\times S^1.$$ Since $\partial(H\setminus \bigcup_{i=1}^s \B_i)$ is a union of disjoint $S^1$'s (one from every disk $H\cap \B_i$ removed), then $\partial N$ is a union of disjoint tori $T_{i}$. 
Let $L_i$ be the link of the singularity at the point $P_i$ (which is a Hopf link with $m_i$ components), and let $S_i^3$ be the boundary of $\B_i$. Consider the space
$$X= N\cup_{\sqcup _i T_i }  (\bigsqcup_{i=1}^s S^3_i \setminus L_i) ,$$
where the gluing is done as follows. 
Note that  $L_i=\bigcup_{j=1}^{m_i} L_{i,j}$, where $L_{i,j}$ is the Hopf link which corresponds to $H_{i,j}$ (here we regard $H$ as $H_{i,1}$).  A meridian around $L_{i,1}$ is glued to $\text{\{point\}}\times S^1 \subset  N$ and $L_{i,1}$ is glued to the $S^1$ corresponding to the boundary of $H \cap \B_i$ for all $1\leq i\leq s$.

\begin{rmk}
One may also consider the boundary manifold of the tubular neighborhood of all lines in $\sA$, denoted by $\partial \U$, since it can be viewed as the boundary manifold of $\U$. Clearly by construction $X$ is only part of $\partial \U$. 
The boundary manifold $\partial \U$ has been thoroughly studied by Westlund in \cite{W} and later by Cohen-Suciu in \cite{CS}. 
\end{rmk}

\subsection{The fundamental group of $X$}
In this subsection, we compute $\pi_1(X)$.
 
We first recall the fundamental group of the local complement $\U\cap \B_i$, which is homotopy equivalent to $\U_i= \U\cap S^3_i$. Let $\alpha_{i,j}$ denotes the meridian associated to $H_{i,j}$ and $\beta_i$ denote the generator of the  fundamental group of the fiber for the Hopf map over $\U_i$. Then we have that 
\begin{equation}\label{eq1}
\begin{aligned}
\pi_1(\U_i) &\cong \left\langle \beta_i,\alpha_{i,1}, \alpha_{i,2},\dots,\alpha_{i,m_i} | \beta_i=\alpha_{i,1}\alpha_{i,2}\cdots \alpha_{i,m_i}, [\beta_i,\alpha_{i,j}] , 1\leq j\leq m_i \right\rangle \\
&\cong \left\langle \beta_i,\alpha_{i,1},\alpha_{i,2},\dots,\alpha_{i,m_i-1} | [\beta_i,\alpha_{i,j}], 1\leq j\leq m_i-1 \right\rangle
\end{aligned}
\end{equation}

Abusing notations, we will look at  the $\alpha_{i,j}$'s and $\beta_i$'s  inside of $\pi_1(X)$ via the maps $\pi_1(\U_i)\to \pi_1(X)$ induced by inclusion.
Since we regard $H$ as $H_{i,1}$, hence $\alpha_{i,1}=\alpha$ for all $1\leq i \leq s$, where $\alpha$ is the meridian along $H$.  In fact,  all $\alpha_{i,1}$'s represent the same element in $\pi_1(X)$ up to homotopy equivalence, since moving the meridian of $H$ along $H\setminus \cup_{i=1}^s \B_i$ does not change its homotopy class. 

\begin{prop}
With the above assumptions and notations, we have \begin{equation}\label{eqX}
\begin{aligned}
\pi_1(X)
& \cong
\left\langle\begin{array}{cc|cc}
 \alpha, &  & \alpha^{1-s}\beta_1\cdots\beta_s=1, & \\
\beta_i, & 1\leq i\leq s,   & \beta_i=\alpha\alpha_{i,2}\cdots \alpha_{1,m_i}, & 1\leq i\leq s\\
\alpha_{i,j},&  2\leq j\leq m_{i} & [\alpha_{i,j},\beta_i] ,[\alpha,\beta_i], & 2\leq j\leq m_{i}
\end{array}\right\rangle 
\\
&\cong 
\left\langle\begin{array}{cc|cc}
\alpha,\beta_i, & 1\leq i\leq s & \alpha^{1-s}\beta_1\cdots\beta_s=1 ,& 1\leq i\leq s\\
\alpha_{i,j}, & 2\leq j\leq m_i-1 & [\alpha_{i,j},\beta_i] ,[\alpha,\beta_i], & 2\leq j\leq m_i-1
\end{array}\right\rangle \\
\end{aligned}
\end{equation}

\end{prop}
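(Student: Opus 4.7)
The plan is to apply the Seifert--van Kampen theorem iteratively to the decomposition $X = N\cup_{\bigsqcup_i T_i}\bigsqcup_{i=1}^s(S^3_i\setminus L_i)$, gluing in one piece $S^3_i\setminus L_i$ at a time along the torus $T_i$. Each gluing produces an amalgamated free product over $\pi_1(T_i)\cong \Z^2$, so the bookkeeping is straightforward.

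First I compute $\pi_1(N)$. The base $H\setminus\bigcup_i\B_i$ is a $2$-sphere with $s$ open disks removed, hence its fundamental group is free of rank $s-1$, presented as $\langle\gamma_1,\dots,\gamma_s\mid \gamma_1\cdots\gamma_s=1\rangle$, where $\gamma_i$ is the boundary loop of $H\cap\B_i$. Since $N$ is, with a chosen trivialization, the product with $S^1$, I obtain
\[ \pi_1(N)\cong\langle\alpha,\gamma_1,\dots,\gamma_s \mid \gamma_1\cdots\gamma_s=1,\ [\alpha,\gamma_i]\text{ for all }i\rangle, \]
with $\alpha$ the meridian of $H$ (the $S^1$-factor). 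The second ingredient is $\pi_1(\U_i)$ as in \eqref{eq1}. On each torus $T_i$, the image of $\pi_1(T_i)$ in $\pi_1(\U_i)$ lies in the abelian subgroup generated by the meridian $\alpha_{i,1}$ and a longitude of $L_{i,1}$, while its image in $\pi_1(N)$ is generated by $\alpha$ and $\gamma_i$. The meridian directions match on both sides, forcing $\alpha=\alpha_{i,1}$, which is why all the $\alpha_{i,1}$'s collapse to the single element $\alpha\in\pi_1(X)$.

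The delicate point is identifying $\gamma_i$ inside $\pi_1(\U_i)$, which amounts to choosing a framing of the normal bundle of $H$ over $H\setminus\bigcup_i\B_i$. This restricted bundle is trivializable, but since the normal bundle of $H=\CP^1$ inside $\CP^2$ has degree $H\cdot H=1$, the total winding of any trivialization around the $s$ boundary circles, measured relative to the Seifert framing of $L_{i,1}$, equals $1$. I take the trivialization with winding $1$ at $\partial\B_1$ and winding $0$ at $\partial\B_i$ for $i\geq 2$. With this choice, $\gamma_1$ represents the Hopf fiber at $P_1$, so $\gamma_1=\beta_1$; while for $i\geq 2$, $\gamma_i$ is the Seifert longitude $l_i$ of $L_{i,1}$, which satisfies $\beta_i=l_i\alpha$ because $\beta_i$ has linking number $1$ with $L_{i,1}$ and $l_i$ has linking number $0$. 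Hence $\gamma_i=\beta_i\alpha^{-1}$ for $i\geq 2$.

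Substituting these identifications into $\gamma_1\cdots\gamma_s=1$ and using $[\alpha,\beta_i]=1$ (which holds because $\beta_i$ is central in $\pi_1(\U_i)$), the product telescopes to $\alpha^{1-s}\beta_1\cdots\beta_s=1$. Assembling the amalgamated free products then yields the first presentation of \eqref{eqX}: its generators are $\alpha$ together with the $\beta_i$ and $\alpha_{i,j}$ inherited from each $\pi_1(\U_i)$, and its relations are those from \eqref{eq1} together with the new substituted relation. For the second presentation I use $\beta_i=\alpha\alpha_{i,2}\cdots\alpha_{i,m_i}$ to eliminate the generator $\alpha_{i,m_i}$; the relation $[\alpha_{i,m_i},\beta_i]$ then follows from $[\alpha,\beta_i]$ and the remaining $[\alpha_{i,j},\beta_i]$ for $j<m_i$. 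The main obstacle is the framing calculation in the third paragraph, which genuinely uses the positive self-intersection of $H$ in $\CP^2$; everything else is routine van Kampen bookkeeping.
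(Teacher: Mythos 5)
Your proof is correct and follows essentially the same route as the paper: van Kampen applied to the decomposition $X = N \cup_{\bigsqcup_i T_i}\bigsqcup_i (S^3_i\setminus L_i)$, with the exponent $1-s$ in the last relation ultimately coming from the self-intersection $H\cdot H = 1$ spread over the $s$ singular points. Where the paper blows up at the $P_i$ and cites Hirzebruch and Westlund for the Euler number $1-s$ of the resulting $S^1$-bundle over $\widetilde{H}$, you perform the equivalent relative-framing bookkeeping directly on the normal bundle of $H$; this is the same computation, rendered in a slightly more self-contained way.
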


\begin{proof}
We use Van-Kampen Theorem to compute $\pi_1(X)$. Note that $S^3_i \setminus L_i$ is isomorphic to $S^1\times (\CP^1 - \{ m_i \text{ points}\})$, which is a trivial bundle. So $X$ is constructed by pasting certain $S^1$-bundles along their boundaries, which are some disjoint tori. 
Using Van-Kampen Theorem, we see that $\pi_1(X)$ can be generated by the elements $\alpha,\beta_i,\alpha_{i,j}$. We have the following commutation relation due to the fundamental group of $\U_i$:
$$[\alpha,\beta_i]=1,[\alpha_{i,j},\beta_i]=1, 1\leq i\leq s, 2 \leq j\leq m_i.
$$ 

To have a better understanding of  $\pi_1(X)$, we follow the approach by Hirzebruch \cite{H}. Consider the blow up of $\CP^2$ at all points $P_i$ for $1\leq i \leq s$, denoted by $\widetilde{\CP^2}$. Let $\widetilde{H}$ denote the strict transform of $H$ in $\widetilde{\CP^2}$.
Consider the $S^1$-bundle over $\widetilde{H}$ associated to the conormal bundle of $\widetilde{H}$ in $\widetilde{\CP^2}$. Since $\widetilde{H}=\CP^1$, this is a $S^1$-bundle over $\CP^1$ with Euler number $1-s$. For a detailed computation of this Euler number, see \cite[Section 2]{H} and \cite[Section 2.2]{W}. 

Since the blow-up map does not change $\U$, we can consider $X\subset \U \subset \widetilde{\CP^2}$.
The left relation relies on the Euler number of $ S^1$  bundle over $\widetilde{H}$. In fact, considering the product of the generators (total boundary) in the $ S^1$  bundle over $\widetilde{H}$ gives the following relation
$\alpha^{1-s}\beta_1\cdots\beta_s=1.$
For a detailed discussion, see  \cite[Section 2]{H} by Hirzebruch and \cite[Section 2.2.2]{W} by Westlund.
\end{proof}

\br Using the commutation relations, $\alpha^{1-s}\beta_1\cdots\beta_s=1$ is equivalent to the relation $$\alpha \alpha_{1,2}\cdots \alpha_{1,m_1}\cdots \alpha_{s,2}\cdots \alpha_{s,m_s}=1.$$ This phenomenon is already explored by Cohen-Suciu \cite[Section 3.6]{CS}. It is compatible with the fact that the map $\pi_1(X)\to \pi_1(\U)$ induced by inclusion is an epimorphism (see section 3.1) and the same relation holds for $\pi_1(\U)$, see e.g. \cite[Corollary 4.7 (ii)]{Dim17}.
\er

\subsection{Fox calculus and the Alexander matrix}
In this subsection, we use Fox calculus to compute the Alexander matrix. 
We first briefly recall the basic properties of Fox calculus. 
For more details, see \cite{Fox56} and \cite[Chapter \uppercase\expandafter{\romannumeral7}]{CF77}. 

Given a finitely presented group $G=\left\langle x_1,\dots,x_m|r_1,\dots,r_l\right\rangle$, let $\tilde{G}=\left\langle x_1,\dots, x_m\right\rangle$ be a free group with $m$-generators. Let $\xi:\tilde{G}\xrightarrow{} G$  
and $\rho\colon G\xrightarrow{} \dfrac{G}{[G,G]}$ denote  the natural quotient morphisms, where $[G,G]$ is the commutator subgroup of $G$.  Note that $\rho$ can be uniquely  extended to a homomorphism on the corresponding group rings, still denoted by $\rho$ by abuse of notations.

Define the $i$-th Fox derivative  $\dfrac{\partial}{\partial x_i}: \tilde{G} \rightarrow \Z G$ for all $1\leq i \leq m$ such that 
$$\dfrac{\partial}{\partial x_i}(x_j)=\delta_{ij}$$ and
$$\dfrac{\partial}{\partial x_i}(g_1g_2)=\dfrac{\partial}{\partial x_i}(g_1)+ \xi(g_1)\dfrac{\partial}{\partial x_i}(g_2),$$
where $\delta_{ij}$ is Kronecker symbol, $g_1,g_2$ are elements of $\tilde{G}$ .

\bd The Alexander matrix of group $G$ is a matrix $M=(c_{ij})$, where the size of $M$ is $l\times m$ and the entries are given by $c_{ij}=\rho (\dfrac{\partial r_i}{\partial x_j})$.  
\ed

Now let us do the Fox calculus for $\pi_1(X)$. For the relations in $\pi_1(X)$, we have the  following formulas:
$$\dfrac{\partial}{\partial \alpha}(\alpha^{-1})=-\alpha^{-1} ,$$

$$\dfrac{\partial}{\partial \alpha}(\alpha^k)=\dfrac{\alpha^k-1}{\alpha-1},  $$

$$\dfrac{\partial}{\partial \alpha_{i,j}}( [\alpha_{i,j},\beta_i])
=1-\alpha_{i,j}\beta_i \alpha_{i,j}^{-1}
\overset{\rho}{\mapsto} 1-\beta_i,
$$

$$\dfrac{\partial }{\partial \beta_i}( [\alpha_{i,j},\beta_i])
=\alpha_{i,j}-\alpha_{i,j}\beta_i \alpha_{i,j}^{-1} \beta_i^{-1}
\overset{\rho}{\mapsto} \alpha_{i,j}-1,
$$

$$\dfrac{\partial }{\partial \beta_i}(\alpha ^{1-s}\beta_1\cdots \beta_i)
=\alpha^{1-s}\beta_1\cdots \beta_{i-1}
\overset{\rho}{\mapsto} \alpha^{1-s}\beta_1\cdots \beta_{i-1}.
$$
Then we get the Alexander matrix of $\pi_1(X)$:
\begin{equation}
M=
\begin{pNiceMatrix}
\dfrac{\alpha^{(1-s)}-1}{\alpha-1}       & Q_1  & Q_2  & \cdots & Q_s  \\
V_1    & M_1  &      &        &      \\
V_2    &      & M_2  &        &      \\
\vdots &      &      & \ddots &      \\
V_s    &      &      &        & M_s

\end{pNiceMatrix},
\end{equation}
where the empty entries are zero and $M_i,V_i,Q_i$ are matrices with the following form:  
\begin{equation}
\begin{aligned}
M_i&=
\begin{pNiceMatrix}[first-row,first-col]
& \beta_i & \alpha_{i,2} & \cdots & \alpha_{i,m_i-1} \\
[\alpha,\beta_i] &  \alpha-1  \\
[\alpha_{i,2},\beta_i]  &\alpha_{i,2}-1 & 1-\beta_i \\ 
\vdots &  \vdots &  & \ddots\\
[\alpha_{i,m_i-1},\beta_i]  &\alpha_{i,m_i-1}-1 & & & 1-\beta_i\\   
\end{pNiceMatrix}_{(m_i-1)\times (m_i-1)}, \\
V_i&=(1-\beta_i,0,\cdots,0)^{\mathsf{T}}_{1\times (m_i-1)} , \\
Q_i&=(\alpha^{1-s}\beta_1\cdots \beta_{i-1},0,\cdots,0)_{1\times (m_i-1)}.
\end{aligned}
\end{equation}
Here  $\mathsf{T}$ denote the transpose of  the matrix.

\section{Proof of the main results}
In this section, we give the proof of Theorem \ref{main theorem}.
First we list two equalities that will be used later:
\be \label{equality 2} \epsilon + \sum_{i=1}^s \sum_{j=2}^{m_i} \epsilon_{i,j}= (1-s)\epsilon +\sum_{i=1}^s \varepsilon_i=0\ee
and
\be \label{equality 1} n=1 +\sum_{i=1}^s (m_i-1)=1-s+\sum_{i=1}^{s} m_i. \ee 
\subsection{Divisibility results for Alexander polynomial}
Consider the inclusion map $X\to \U$. Note that $X$ is connected, hence the map induced by inclusion $\pi_0(X)\to \pi_0(\U)$ is an isomorphism. By a Lefschetz hyperplane section theorem type argument, the map  $\pi_1(X)\to \pi_1(\U)$ induced by inclusion is an epimorphism. In fact, we can take a generic line $L$ in $\CP^2$ with respect to $\sA$, which is close enough to the line $H$ such that $L$ is contained in the tubular neighborhood we constructed in section 2. Then  
the map $\pi_1(L\cap \U)\twoheadrightarrow \pi_1(\U)$ is surjective by the Lefschetz hyperplane section theorem (see \cite[Proposition 4.3.1]{D1}).   Note that $L\cap \U \subset  W\setminus H$ and $W\setminus H$ is homotopy equivalent to $X$. Putting all together, we get that the map $\pi_1(X)\to \pi_1(\U)$ induced by inclusion is an epimorphism.
 
 Consider the composed maps $\pi_1(X)\to \pi_1(\U) \overset{\varphi}{\to} \Z$. 
 Let $X^\varphi$ denote the corresponding infinite cyclic covering space and $X^{\varphi,N}$ denote the $N$-fold cyclic covering for any integer $N>1$ induced by further composing with $\Z \to \Z_N$. Then we have the following result simply due to the fact that $\pi_1(X)\to \pi_1(\U)$ is an epimorphism. 
\bl \label{lem epimorphism}
With the above assumptions and notations,   the induced map $H_1(X^{\varphi,N},\K)\to H_1(\U^{\varphi,N},\K)$ is surjective for any field $\K$.
\el

For any field $\K$, by lifting the cell structure of $X$ to $X^\varphi$,
we obtain a free basis for the cellular chain complex of $X^\varphi$ as $\K[t^{\pm}]$-modules. 
Then the cellular chain complex $C_*(X^\varphi,\K)$ is a bounded complex of finitely generated free $\K[t^{\pm}]$-modules.
The first Alexander module $H_1(X^\varphi,\K)$ is the first homology of the complex   $C_*(X^\varphi,\K)$, hence $H_1(X^\varphi,\K)$  is a finitely generated $\K[t^{\pm}]$-modules. Note that $\K[t^{\pm}]$ is a PID.

\bd For any field $\K$, the first Alexander polynomial of $X$ with respect to the covering space $X^\varphi $ is defined 
as the order of the torsion part of $H_1(X^\varphi,\K)$, denoted by $\Delta_1(X^\varphi,\K)$. 
\ed

Consider the covering space $\U^\varphi$ associated to the map $\pi_1(\U) \overset{\varphi}{\to} \Z$.
For any field $\K$, the first Alexander polynomial $\Delta_1(\U^\varphi,\K)$ of $\U$ with respect to the covering space $\U^\varphi$ can be defined similarly.

\bp \label{prop Alexander}
With the above assumptions and notations, if $\epsilon\neq 0$ and $\varepsilon_i\neq 0$ for all $m_i>2$,  then for any field $\K$, $H_1(\U^\varphi,\K)$ is a torsion $\K[t^{\pm}]$-module and 
  $$\Delta_1(\U^\varphi,\K) \mid \ (t-1)\cdot(t^{\epsilon}-1)^{s-2} 
\cdot \prod^s_{i=1}(t^{\varepsilon_i}-1)^{m_i-2}.$$ 
\ep
\begin{proof} 
We use Fox calculus to compute the first Alexander polynomial $\Delta_1(X^\varphi,\K)$. 
In fact, it can be computed by the following Alexander matrix associated to  $X^\varphi$:
\begin{footnotesize}
\begin{equation}
\begin{pNiceMatrix}[first-row,first-col]
 & \alpha & \beta_1 & \alpha_{1,2} & \cdots & \alpha_{1,m_1-1} & \cdots  &\beta_s & \alpha_{s,2} & \cdots & \alpha_{s,m_s-1}  \\
 \alpha^{1-s}\beta_1 \cdots \beta_s &\dfrac{t^{\epsilon (1-s)}-1}{t^\epsilon-1} & t^{\epsilon (1-s)} & 0 & \cdots & 0 & \cdots &  t^{\epsilon (1-s)+\sum_{i=1}^{s-1} \varepsilon_i} & 0 & \cdots & 0\\
[\alpha,\beta_1] & 1-t^{\varepsilon_1} & t^{\epsilon}-1 \\
[\alpha_{1,2},\beta_1] &0 &t^{\epsilon_{1,2}}-1 & 1-t^{\varepsilon_1}\\ 
\vdots &\vdots & \vdots & & \ddots\\
[\alpha_{1,m_1-1},\beta_1] &0 & t^{\epsilon_{1,m_1-1}}-1 & & & 1-t^{\varepsilon_1}\\  
\vdots &\vdots &  & &  & &\ddots\\
\vdots &\vdots &  & &  & &\\
[\alpha,\beta_s] & 1-t^{\varepsilon_s} & & & & & & t^{\epsilon}-1 \\
[\alpha_{s,2},\beta_s] & 0 & & & & & & t^{\epsilon_{s,2}}-1 & 1-t^{\varepsilon_s} \\ 
\vdots  & \vdots  &  & &  & & & \vdots& & \ddots \\
[\alpha_{s,m_s-1},\beta_s] & 0 & & & & & & t^{\epsilon_{s,m_s-1}}-1 & & & 1-t^{\varepsilon_s}\\  
\end{pNiceMatrix}.
\end{equation}
\end{footnotesize}

First, one can show the determinant of this matrix is 0 by cofactor expansion of the first row. Then 
it is easy to see that this matrix has rank $n-1$ if $\epsilon\neq 0$ and  $\varepsilon_i\neq 0$ for all $m_i>2$ (if $m_i=2$, one can see that the column for $\alpha_{i,2}$ does not appear, hence the value of $\varepsilon_i$ does not effect the rank in this case). 
In particular, $H_1(X^\varphi,\K)$ is a torsion $\K[t^\pm]$-module.

Using some elementary column and row transformations, we get the following new matrix:
\begin{footnotesize}
\begin{equation*}
\begin{pNiceArray}[first-row]{ccccc|ccccccc}
 \alpha & \beta_1 & \beta_2 &\cdots & \beta_s & \alpha_{1,2} & \cdots & \alpha_{1,m_1-1}& \cdots  & \alpha_{s,2} & \cdots & \alpha_{s,m_s-1} \\
\dfrac{t^{\epsilon(1-s)}-1}{t^{\epsilon}-1} &  t^{\epsilon(1-s)} & t^{\epsilon(1-s)+\varepsilon_1} & \cdots & t^{\epsilon(1-s)+\sum_{i=1}^{s-1}\varepsilon_i}\\
1-t^{\varepsilon_1} & t^{\epsilon}-1\\
1-t^{\varepsilon_2} & &t^{\epsilon}-1\\ 
\vdots & & & \ddots \\
1-t^{\varepsilon_s} & & & & t^{\epsilon}-1\\ \hline
& t^{\epsilon_{1,2}}-1 & & & & 1-t^{\varepsilon_1} \\
& \vdots & & & & & \ddots \\ 
& t^{\epsilon_{1,m_1-1}}-1 & & & & & & 1-t^{\varepsilon_1} \\ 
&&&\ddots&&&&&\ddots \\ 
&&&&t^{\epsilon_{s,2}}-1 &&&&& 1-t^{\varepsilon_s} \\
&&&& \vdots  &&&&&& \ddots \\
&&&& t^{\epsilon_{s,m_s-1}}-1 &&&&&&& 1-t^{\varepsilon_s} \\

\end{pNiceArray}.
\end{equation*}
\end{footnotesize}

Set $$f(t)=(t^{\epsilon}-1)^{s-2} \prod^s_{i=1}(t^{\varepsilon_i}-1)^{m_i-2}.$$  We consider some minors with size $n-1$. 
Removing the row for $[\alpha,\beta_i]$ and the first column, the column for $\beta_i$ and the column for $\alpha_{i,j}$ with $1\leq i \leq s$ and $2\leq j \leq m_i-1$ respectively, the corresponding  minors are
\begin{center}
   $(t^{\epsilon}-1)\cdot f(t)$, $(t^{\varepsilon_{i}}-1)\cdot f(t) $ and $(t^{\epsilon_{i,j}}-1)\cdot f(t) $ 
\end{center}
up to a unit of $\K[t^{\pm}]$. For  the computation of these minors, one needs  the following equality
$$0=1-t^{\epsilon(1-s)}+(1-t^{\varepsilon_1})t^{\epsilon(1-s)}+\cdots+(1-t^{\varepsilon_{s}})t^{\epsilon(1-s)+\varepsilon_1+\varepsilon_{2}+\cdots+\varepsilon_{s-1}}, $$
which follows from (\ref{equality 2}).

Since $\varepsilon_i=\epsilon+\sum^{m_i}_{j=2}\epsilon_{i,j}$, we have the following equality
 $$1-t^{\varepsilon_i}=(1-t^{\epsilon})+(1-t^{\epsilon_{i,2}})t^{\epsilon}+\cdots +(1-t^{\epsilon_{i,m_i}})t^{\epsilon+\epsilon_{i,2}+\cdots+\epsilon_{i,m_i-1}} .$$
Putting all these together, we have that the Alexander polynomial of $X^\varphi$ divides 
$$\gcd(t^{\epsilon}-1,t^{\epsilon_{1,2}}-1,\dots, t^{\epsilon_{1,m_i}}-1, t^{\epsilon_{2,2}}-1, \dots, t^{\epsilon_{s,m_s}}-1)f(t).$$ 
Note that $\varphi: \pi_1(\U)\twoheadrightarrow \Z$ is surjective, 
which implies  $\gcd(\epsilon,\epsilon_{1,2},\dots,\epsilon_{s,m_s})=1$. 
This fact implies that 
$$\gcd(t^{\epsilon}-1,t^{\epsilon_{1,2}}-1,\dots, t^{\epsilon_{1,m_i}}-1, t^{\epsilon_{2,2}}-1,\dots, t^{\epsilon_{s,m_s}}-1)=t-1,$$ 
which can be proved by the similar trick for above equality of $1-t^{\varepsilon_i}$. 
Hence the Alexander polynomial of $X^\varphi$ divides $(t-1)f(t)$.  We leave it as an exercise for the readers to  check that the proof still works even if some $\epsilon_{i,j}=0$.

On the other hand, since $\pi_1(X) \to \pi_1(\U)$ is surjective, it follows that the inclusion map induces an epimorphism 
$$H_1(X^\varphi,\K)\twoheadrightarrow H_1(\U^\varphi,\K), $$
see e.g. \cite[Proposition 2]{Eld21}. Putting all together, we get that $H_1(\U^\varphi,\K)$ is a torsion $\K[t^\pm]$-module
and the Alexander polynomial of $\U^{\varphi}$ divides the Alexander polynomial of $X^\varphi$. Then the claim follows. 
\end{proof}
\br This proposition is compatible with \cite[Theorem 1]{CDO03} and \cite[Remark 6.9]{LM} for $\K=\C$. 
One may also compare this proposition with the one obtained by Elduque \cite[Theorem 6]{Eld21}. Elduque studied the boundary manifold for the affine tubular neighborhood which is slightly different from ours. She computed the Alexander polynomial by Reidemeister torsion, while we did it by Fox calculus after computing the fundamental group of $X$. In particular, once can check that Proposition \ref{prop Alexander} implies \cite[Theorem 4, Theorem 6]{Eld21} for the non-twisted version. For example, to have $H_1(\U,\K)$ being a torsion $\K[t^\pm]$-module, \cite[Theorem 4]{Eld21} needs the assumption $\epsilon_{i,j}>0$ for all $1\leq i \leq s$ and $2\leq j \leq m_i$. Using equality (\ref{equality 2}) and $s>1$, this certainly implies the assumption in Proposition \ref{prop Alexander}.

 On the other hand, the twisted multivariable Alexander polynomial of the boundary manifold $\partial \U$ was first studied by Cohen-Suciu in \cite[Section 4, 5]{CS}. 
Note that the inclusion map $\partial \U \to \U$ induces an epimorphism on fundamental groups (see e.g. \cite[Remark 10]{Eld21}): 
$$ \pi_1(\partial \U)\twoheadrightarrow \pi_1(\U) .$$
Consider the composed map $\pi_1(\partial \U) \twoheadrightarrow \pi_1(\U) \overset{\varphi}{\to} \Z.$ Let $(\partial \U)^\varphi$ denote the corresponding infinite cyclic covering space. 
The assumptions in Proposition \ref{prop Alexander} are not enough to show that $H_1((\partial \U)^\varphi,\K)$ is a torsion $\K[t^\pm]$-module. But with stronger assumptions as in \cite[Remark 10]{Eld21}, Elduque showed that $H_1((\partial \U)^\varphi,\K)$  is torsion and the corresponding Alexander polynomial is very similar to the  twisted multivariable Alexander polynomial obtained by Cohen and Suciu in \cite[Theorem 5.2]{CS}. In particular, she showed in  loc. cit.  that
\begin{center}
     $ \Delta_1(X^\varphi,\K)$ divides $ \Delta_1((\partial \U)^\varphi,\K) .$ 
\end{center}
\er

\subsection{Proof of Theorem \ref{main theorem}}
To prove Theorem \ref{main theorem}, we need the following lemma.
\bl \label{lem linear algebra} Consider the integer matrix $C_N$ of size $N\times N$:
$$C_N=
\begin{pNiceMatrix}
0 & 1 & 0 & \cdots & 0\\
0 & 0 & 1 & \cdots & 0\\
\vdots & & & & \vdots\\
0 & 0 & 0 & \cdots & 1\\
1 & 0 & 0 & \cdots & 0\\
\end{pNiceMatrix}.
$$ For any positive integer $k$ and any field $\K$, $\rk_\K \big(C_N^k-I_N\big)=N-\gcd(k,N)$, where $I_N$ is identity matrix. 
\el 
\begin{proof}
Since $C_N^N=I_N$, we can assume that $1\leq k < N$ without loss of generality (the case $k=N$ is clear). 
Since $(C_N^k-I_N)^\mathsf{T}= C_N^{N-k}-I_N$ (here $*^\mathsf{T}$ denotes the transpose matrix), we only focus on the case $N-k\leq k$ in the rest of the proof. Then  $C_N^k-I_N$ equals 
$$
\begin{pNiceArray}[first-row,first-col]{ccc|ccccc}
      & 1 & \cdots      &       &       &  \cdots     & k+1     &  \cdots      &    \\
1     & -1 &       &       &       &       & 1     &       &    \\
\vdots &    &\ddots &       &       &       &       &\ddots &    \\
N-k   &    &       & -1    &       &       &       &       & 1  \\ \hline
N-k+1 & 1  &       &       & -1    &       &       &       &    \\
\vdots &    &\ddots &       &       &\ddots &       &       &    \\
\vdots &    &       &\ddots &       &       &\ddots &       &    \\
\vdots &    &       &       &\ddots &       &       &\ddots &    \\
N     &    &       &       &       & 1     &       &       & -1
\end{pNiceArray}
$$     
where all the entries that are not on the three skew lines are zero.

Using row operations, we can kill the lower-left sub-matrix by adding the $i$-th row to the $(N-k+i)$-th row for all $1\leq i\leq N-k$. Next, using column operations, we can kill the upper-right sub-matrix. 
Then $C_N^k-I_N$ becomes in the form:
$$\begin{pNiceMatrix}
I_{N-k} & 0 \\
0       & C_{k}^{\ell} -I_{k} 
\end{pNiceMatrix}$$  
for some positive integer $\ell$.

Note that $C_{k}^{\ell} -I_{k}$ corresponds to $(t^\ell-1)(C_k)$. Since $k<N$, using induction on $N$  we can prove that the Smith normal form of $C_N^k-I_N$ is a matrix with entries on the diagonal being either $1$ or $0$. The starting step for $N=2$ can be checked easily. 
So the rank of $ (C_N^k-I_N)$ does not depend on the choice of $\K$ and we only need to compute $\rk_\C (C_N^k-I_N)$.

Note that the characteristic polynomial of $C_N$ is $t^N-1$.
Then it is easy to see that $\rk_\C (C_N^k-I_N)=N-\gcd(k,N)$.
\end{proof}

\begin{proof}[Proof of Theorem \ref{main theorem}]
We proceed the proof by two steps.
\medskip 
\item[Step 1:] We first simplify the Alexander matrix of $X$ associated to $X^\varphi$ by elementary transformations.

Recall the following the Alexander matrix $A_\varphi$ associated to $X^\varphi$ by furthermore assuming $\epsilon=1$, where we identify the group ring $\Z[\Z]$ as $\Z[t^\pm]$:
\begin{footnotesize}
$$\begin{pNiceMatrix}[first-row,first-col]
 & \alpha & \beta_1 & \alpha_{1,2} & \cdots & \alpha_{1,m_1-1} & \cdots  &\beta_s & \alpha_{s,2} & \cdots & \alpha_{s,m_s-1}  \\
 \alpha^{1-s}\beta_1 \cdots \beta_s &\dfrac{t^{1-s}-1}{t-1} & t^{ 1-s} & 0 & \cdots & 0 & \cdots &  t^{ (1-s)+\sum_{i=1}^{s-1} \varepsilon_i} & 0 & \cdots & 0\\
[\alpha,\beta_1] & 1-t^{\varepsilon_1} & t-1 \\
[\alpha_{1,2},\beta_1] &0 &t^{\epsilon_{1,2}}-1 & 1-t^{\varepsilon_1}\\ 
\vdots &\vdots & \vdots & & \ddots\\
[\alpha_{1,m_1-1},\beta_1] &0 & t^{\epsilon_{1,m_1-1}}-1 & & & 1-t^{\varepsilon_1}\\  
\vdots &\vdots &  & &  & &\ddots\\
\vdots &\vdots &  & &  & &\\
[\alpha,\beta_s] & 1-t^{\varepsilon_s} & & & & & & t-1 \\
[\alpha_{s,2},\beta_s] & 0 & & & & & & t^{\epsilon_{s,2}}-1 & 1-t^{\varepsilon_s} \\ 
\vdots  & \vdots  &  & &  & & & \vdots& & \ddots \\
[\alpha_{s,m_s-1},\beta_s] & 0 & & & & & & t^{\epsilon_{s,m_s-1}}-1 & & & 1-t^{\varepsilon_s}\\  
\end{pNiceMatrix}.
$$
\end{footnotesize}
 Next we do some elementary transformations for $A_\varphi$. 
First, we add the sum of the column for $\beta_i$ multiplying $\dfrac{t^{\varepsilon_i}-1}{t-1}$ to the first column. 
Then the entry $(1,1)$ becomes to
$$\dfrac{t^{1-s}-1}{t-1}
+\sum^s_{i=1} t^{1-s+\varepsilon_1+\cdots+\varepsilon_{i-1}}
\dfrac{t^{\varepsilon_i}-1}{t-1}
=\dfrac{t^{1-s+\varepsilon_1+\cdots +\varepsilon_s}-1}{t-1}
=\dfrac{t^0-1}{t-1}
=0,
$$
where the second equality follows from (\ref{equality 2}).  

Note that the entry $([\alpha_{i,j},\beta_i],1)$ is $\dfrac{(t^{\varepsilon_i}-1)(t^{\epsilon_{i,j}}-1)}{t-1}$, which can be killed by the entry $t^{\epsilon_i}-1$ in  $\alpha_{i,j}$ column for $2\leq j \leq m_i-1$. 
Here if $\epsilon_{i,j}=0$, this entry automatically equals to $0$, and if $m_i=2$, the row for $[\alpha_{i,j},\beta_i]$ does not appear.  
Then the matrix takes in the following form:

$$\begin{pNiceMatrix}
0 & t^{ 1-s} & 0 & \cdots & 0 & \cdots &t^{ (1-s)+\sum_{i=1}^{s-1} \varepsilon_i}  & 0 & \cdots & 0\\
0  & t-1 \\
0 &  & 1-t^{\varepsilon_1}\\ 
\vdots & & & \ddots\\
0 &  & & & 1-t^{\varepsilon_1}\\  
\vdots &  & &  & &\ddots\\
\vdots & &  & &  & &\\
0 & & & & & & t-1 \\
 0 & & & & & &  & 1-t^{\varepsilon_s} \\ 
\vdots    &  & &  & & & & & \ddots \\
0 & & & & & &  & & & 1-t^{\varepsilon_s}\\  
\end{pNiceMatrix}.$$

Add a multiple with $-(t-1)t^{s-1}$ of the first row to the second row. Then the entry $(2,2)$ becomes 0,  and the other non-zero entries on the new second row can be killed by the row corresponding to $[\alpha,\beta_i]$, whose only non-zero entry is $(t-1)$ on the diagonal. 

Finally by these procedures $A_\varphi$ becomes to a diagonal matrix
\be \label{diagonal} 
A'_\varphi=\mathrm{diag}(0, t^{1-s}, \overbrace{t-1,\dots,t-1}^{s-1}, \overbrace{ 1-t^{\varepsilon_1}}^{m_1-2},\dots,\overbrace{ 1-t^{\varepsilon_s}}^{m_s-2}). 
\ee

\medskip 
\item[Step 2:]  We compute $H_1(X^{\varphi,N},\K)$ for any field $\K$.

Since we focus on the first homology which is determined by $\pi_1(X)$, we can assume that $X$ is a 2-dimensional CW complex with its 2-cells corresponding to the relations in $\pi_1(X)$. 
The group of deck transformations of the covering space $X^\varphi$ is isomorphic to $\Z$ and acts on it. 
By lifting the cell structure of $X$ to $X^\varphi$,
we obtain a free basis for the cellular chain complex of $X^\varphi$ as $\K[t^{\pm}]$-modules. 
Then the cellular chain complex $C_*(X^\varphi,\K)$ is a bounded complex of finitely generated free $\K[t^{\pm}]$-modules:

$$0\rightarrow C_2(X^\varphi,\K) \xrightarrow{\partial_2} C_1(X^\varphi,\K) \xrightarrow{\partial_1} C_0(X^\varphi,\K) \rightarrow 0.$$
With the  right cell structure of $X$ with respect to the presentation of $\pi_1(X)$ in section 2.2, $\partial_2$ can be written down as $A_\varphi$. Note that one can compute $H_1(X^{\varphi,N},\K)$ by taking the first homology (as $\K$-vector spaces) of the chain complex
$C_*(X^\varphi,\K) \otimes_{\K[t^{\pm}]} \K[t^{\pm}]/(t^N-1) $.
The map $\partial_2$ becomes $A_\varphi(C_N)$, which  we replace $t$ by $C_N$. The elementary transformation of $\Z[t,t^{-1}] $-coefficient on $A_\varphi$ corresponds to elementary transformation of block matrix on $A_{\varphi}(C_N)$. So we can work on the diagonal matrix $A'_\varphi$ directly.

By Lemma \ref{lem linear algebra}, we have that 
$$ \begin{aligned}
\rk_\K A_\varphi(C_N)
&=N+(s-1)(N-1)+\sum^s_{i=1}(m_i-2)(N-\gcd(\varepsilon_i,N))\\
&=\left(sN+(N-1)\sum^s_{i=1}(m_i-2)\right)
-s+1-\sum^s_{i=1}(m_i-2)(\gcd(\varepsilon_i,N)-1)\\
&=(n-1)N-n+2-\sum^s_{i=1}(m_i-2)(\gcd(\varepsilon_i,N)-1),
\end{aligned}
$$
where the second equality follows from (\ref{equality 1}).

On the other hand, since the composed map $\pi_1(X)\to \pi_1(\U) \to \Z_N$ is surjective, $X^{\varphi,N}$ is connected. Hence 
$$\dim_\K H_1(X^{\varphi,N},\K)= nN-(N-1)-\rk_\K A_\varphi(C_N)=n-1+\sum^s_{i=1}(m_i-2)(\gcd(\varepsilon_i,N)-1)$$
Then the inequality (\ref{main inequality}) follows from Lemma \ref{lem epimorphism}.

 For any field $\K$, by  the Universal Coefficients Theorem and \cite[Corollary 2.5]{CDS03}  we have
$$\dim_\K H_1(\U^{\varphi,N},\K)\geq \dim_\C H_1(\U^{\varphi,N},\C) \geq \dim_\C H_1(\U,\C) =n-1.$$ Therefore, if  $\gcd(\varepsilon_i,N)=1$ for all $m_i>2$, 
$\dim_\K H_1(\U^{\varphi,N},\K) = n-1$ for any field $\K$, hence $H_1(\U^{\varphi_N},\Z)\cong \Z^{n-1}$ by the Universal Coefficients Theorem.
\end{proof}
\begin{proof}[Proof of Corollary \ref{cor Milnor fiber}]
 Cohen and Suciu first showed in \cite[Proposition 1.2]{CS95} that the  Milnor fiber is the finite cyclic cover of $U$  associated to the map $\pi_1(\U) \to \Z_n$, which sends every meridian of lines in $\sA$ to $1\in \Z_n$. See also \cite[Theorem 4.10]{Suc14} for a different proof. 
  By Remark \ref{rem assumption}, the Minor fiber satisfies the assumption in Theorem \ref{main theorem} and the claim follows.   
\end{proof}

\bex Consider the icosidodecahedral arrangement consisting of 16 lines in $\CP^2$ introduced by Yoshinaga \cite[Fig.3]{Yos20}.  
Every line in this arrangement has only multiple points with multiplicity 2 or 4. In particular, the line $H_{16}$ has exactly 5 multiple points with multiplicity all being 4. So torsion in the homology of the Milnor fiber is possible according to Theorem \ref{main theorem}. In fact, Yoshinaga showed that $H_1(F,\Z)$ has 2-torsion \cite[Theorem 1.2]{Yos20}.

On the other hand, we have a lot of 16-fold covering of $\U$, whose first homology group is torsion-free by Theorem \ref{main theorem}. For example, 
consider the 16-fold covering by sending $\alpha_{1}\to 1$, $\alpha_2,\alpha_5,\alpha_8 \to 2$, $\alpha_3 \to -2$ and other lines to $1$. 
Then the first integral homology group of this 16-fold cover is torsion-free by applying Theorem \ref{main theorem} on the first line $H_1\in \sA$ as in \cite[Fig.3]{Yos20}. 
\eex

\bibliographystyle{amsalpha}

\end{document}